\newtheorem{thm}{Theorem}[section]
\newtheorem{lem}[thm]{Lemma}
\newtheorem{prop}[thm]{Proposition}
\newtheorem{cor}[thm]{Corollary}
\theoremstyle{definition}
\newtheorem{dfn}[thm]{Definition}
\newtheorem{rmk}[thm]{Remark}
\theoremstyle{remark}
\newtheorem*{ac}{Acknowlegments}
\numberwithin{equation}{thm}
\def\cm{\mathsf{CM}}
\def\Cok{\operatorname{Coker}}
\def\D{\mathrm{D}}
\def\Ext{\operatorname{Ext}}
\def\fl{\mathsf{FL}}
\def\G{\mathcal{G}}
\def\ge{\geqslant}
\def\gp{\mathsf{GP}}
\def\grade{\operatorname{grade}}
\def\grd{\mathsf{Grd}}
\def\H{\operatorname{H}}
\def\Hom{\operatorname{Hom}}
\def\image{\operatorname{Im}}
\def\le{\leqslant}
\def\m{\mathfrak{m}}
\def\mod{\operatorname{mod}}
\def\Perf{\mathsf{Perf}}
\def\proj{\operatorname{\mathsf{proj}}}
\def\Ref{\mathsf{Ref}}
\def\sph{\mathsf{Sph}}
\def\syz{\Omega}
\def\tf{\mathsf{TF}}
\def\tr{\operatorname{Tr}}
\def\X{\mathcal{X}}
\def\Y{\mathcal{Y}}
\def\ZZ{\mathbb{Z}}
\begin{document}
\allowdisplaybreaks
\title[Stable categories of spherical modules and torsionfree modules]{Stable categories of spherical modules and torsionfree modules}
\author{Yuya Otake}
\address{Graduate School of Mathematics, Nagoya University, Furocho, Chikusaku, Nagoya 464-8602, Japan}
\email{m21012v@math.nagoya-u.ac.jp}

\thanks{2020 {\em Mathematics Subject Classification.} 16D90, 13C60.}
\thanks{{\em Key words and phrases.} $n$-spherical module, $n$-torsionfree module, stable category, totally reflexive module,  grade, local cohomology,  Gorenstein ring, regular ring.}
\begin{abstract}
Auslander and Bridger introduced the notions of $n$-spherical modules and $n$-torsionfree modules.
In this paper, we construct an equivalence between the stable category of $n$-spherical modules and the category of modules of grade at least $n$, and provide its Gorenstein analogue.
As an application, we prove that if $R$ is a Gorenstein local ring of Krull dimension $d>0$, then there exists a stable equivalence between the category of $(d-1)$-torsionfree $R$-modules and the category of $d$-spherical modules relative to the local cohomology functor. 
\end{abstract}
\maketitle
\section{Introduction}
Throughout this paper, let $R$ be a two-sided noetherian ring.
All subcategories are assumed to be strictly full.
Denote by $\mod R$ the category of finitely generated (right) $R$-modules.

Auslander and Bridger \cite{AB} introduced the notion of $n$-spherical modules for each positive integer $n$: a finitely generated  $R$-module $M$ is called {\em $n$-spherical} if $\Ext^i_R(M,R)=0$ for all $1\le i\le n-1$ and $M$ has projective dimension at most $n$.
Note that when this is the case, $\Ext^i_R(M,R)=0$ for all $i\ne0, n$.
Auslander and Bridger found various important properties related to $n$-spherical modules.
For example, the spherical filtration theorem they proved asserts that any finitely generated module $M$ satisfying a certain grade condition has a filtration
$$M_m\subset M_{m-1}\subset\cdots\subset M_1\subset M_0=M\oplus P,$$
where $P$ is projective, such that $M_{j-1}/M_j$ is $j$-spherical for all $1\le j\le m$.
Recently, Huang \cite{H} proved the dual version of the spherical filtration theorem.

In this paper, we study the stable category of $n$-spherical modules.
Moreover, we introduce the notion of $n$-G-spherical modules by replacing projective dimension in the definition of $n$-spherical modules with Gorenstein dimension, and give similar results for the stable category of $n$-G-spherical modules.
These are related to the category of modules with high grade and the category of totally reflexive modules.
To be precise, the following theorem holds.

\begin{thm}\label{main1}
Let $n$ be a positive integer.
Consider the following subcategories of $\mod R$.
\begin{align*}
\sph_n(R)&=\{M\in \mod R\mid\text{$M$ is $n$-spherical }\},\\
\grd_n(R)&=\{M\in \mod R\mid \grade_R M\ge n\},\\
\sph_n^{\sf G}(R)&=\{M\in \mod R\mid\text{$M$ is $n$-G-spherical }\},\\
\Ref^{\sf T}(R)&=\{M\in \mod R\mid\text{$M$ is totally reflexive }\}.
\end{align*}
One then has the equivalences
$$
\xymatrix{
\underline{\sph_n(R)}\ar@<.5mm>[rr]^-{\Ext^n(-,R)}&&\grd_n(R^{\rm op})\ar@<.5mm>[ll]^-{\tr\syz^{n-1}},\\
\underline{\sph_n^{\sf G}(R)}\ar@<.5mm>[rr]^-{\tr\syz^{n-1}}&&\underline{\grd_n(R^{\rm op})\ast\Ref^{\sf T}(R^{\rm op})}.\ar@<.5mm>[ll]^-{\tr\syz^{n-1}}
}
$$
\end{thm}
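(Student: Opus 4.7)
The first equivalence rests on the identification $\Ext^n(M,R)\cong\tr\syz^{n-1}M$ in $\mod R^{\rm op}$ (modulo projective summands) for any $M$ with $\Ext^i(M,R)=0$ for $1\le i\le n-1$. Given a projective resolution $0\to P_n\to\cdots\to P_0\to M\to 0$ of $M\in\sph_n(R)$, the truncation $P_n\to P_{n-1}\to\syz^{n-1}M\to 0$ is a projective presentation, so $\tr\syz^{n-1}M=\Cok(P_{n-1}^*\to P_n^*)$; dualizing the whole resolution and using the Ext-vanishing realizes $\Ext^n(M,R)$ as the same cokernel. Thus the two functors in the first equivalence agree on $\sph_n(R)$, and it suffices to show this single functor is an equivalence.

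For well-definedness, dualizing the resolution gives the exact sequence $0\to M^*\to P_0^*\to\cdots\to P_n^*\to\Ext^n(M,R)\to 0$; splicing it into short exact sequences and iterating $\Hom(-,R)$ with reflexivity of projectives yields $\Ext^i_{R^{\rm op}}(\Ext^n(M,R),R)=0$ for $i<n$, placing $\Ext^n(M,R)\in\grd_n(R^{\rm op})$. Symmetrically, for $N\in\grd_n(R^{\rm op})$ with projective resolution $\cdots\to Q_1\to Q_0\to N\to 0$, the grade condition makes
$$0\to Q_0^*\to Q_1^*\to\cdots\to Q_n^*\to\tr\syz^{n-1}N\to 0$$
exact, hence a projective resolution of length $\le n$ of $\tr\syz^{n-1}N$. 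Dualizing back gives $\Ext^n(\tr\syz^{n-1}N,R)\cong N$ and vanishing of intermediate Ext, so $\tr\syz^{n-1}N\in\sph_n(R)$. Both compositions are identity (up to projective summands) by reading off the same resolutions.

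For the Gorenstein analogue, $G=\syz^nM$ is only totally reflexive, so the dualized resolution of $M\in\sph_n^{\sf G}(R)$ no longer truncates cleanly. Dualizing $0\to G\to P_{n-1}\to\syz^{n-1}M\to 0$ via $\Ext^1(G,R)=0$ yields $\Cok(P_{n-1}^*\to G^*)=\Ext^n(M,R)$; combined with the inclusion $G^*\hookrightarrow P_n^*$ from dualizing a projective presentation $P_{n+1}\to P_n\to G\to 0$, the factorization of the differential $P_{n-1}^*\to P_n^*$ through $G^*$ produces the short exact sequence
$$0\to\Ext^n(M,R)\to\tr\syz^{n-1}M\to P_n^*/G^*\to 0.$$
The quotient $P_n^*/G^*$ fits into $0\to P_n^*/G^*\to P_{n+1}^*\to\tr G\to 0$ with $\tr G$ totally reflexive (as $G$ is), so closure of $\Ref^{\sf T}$ under kernels of surjections between totally reflexives forces $P_n^*/G^*\in\Ref^{\sf T}(R^{\rm op})$. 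Hence $\tr\syz^{n-1}M\in\grd_n(R^{\rm op})\ast\Ref^{\sf T}(R^{\rm op})$. The reverse direction proceeds analogously from a short exact sequence representation of any object in the target, and self-inverseness of $\tr\syz^{n-1}$ on the stable categories follows by the same resolution-level argument as in the first equivalence.

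\textbf{Main obstacle.} The chief technical challenges are verifying $\Ext^n(M,R)\in\grd_n(R^{\rm op})$ for $M\in\sph_n^{\sf G}(R)$ (which goes beyond the $\pd<\infty$ argument and needs a more delicate dualization exploiting the total reflexivity of $G$), and proving full faithfulness of $\tr\syz^{n-1}$ on the stable categories, where the short exact sequence decomposition of $\tr\syz^{n-1}M$ is only canonical modulo projective summands. Both should succumb to careful resolution-level bookkeeping.
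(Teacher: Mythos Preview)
Your resolution-level approach is sound and, at bottom, the same argument as the paper's—only the packaging differs. The short exact sequence you construct,
\[0\to\Ext^n(M,R)\to\tr\syz^{n-1}M\to P_n^*/G^*\to 0,\]
is exactly the Auslander--Bridger sequence $0\to\Ext^1(\tr N,R)\to N\to\D(N)\to 0$ specialized to $N=\tr\syz^{n-1}M$: since $\tr N\approx\syz^{n-1}M$ one has $\Ext^1(\tr N,R)\cong\Ext^n(M,R)$, and $\D(N)\approx\syz\tr\syz^n M=\syz\tr G\approx P_n^*/G^*$. The paper works on the $N$-side throughout, abstracting this into the identity $\grd_n\ast\X=\D^{-1}(\X)\cap\G_{n-1,0}$ (its Proposition~\ref{key}), valid for any $\X$ closed under stable isomorphism with $\D(\X)\subset\X\subset\G_{n,0}$; taking $\X=\proj$ and $\X=\gp$ then yields both equivalences uniformly.

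Your ``main obstacle'' is misidentified. The inequality $\grade_{R^{\rm op}}\Ext^n(M,R)\ge n$ drops out of the long exact sequence of your own extension: $P_n^*/G^*\approx\syz\tr G$ is totally reflexive, and $\tr\syz^{n-1}M\in\G_{n-1,0}(R^{\rm op})$ is immediate from exactness of $P_0^*\to\cdots\to P_n^*$ together with double-dualizing. What you actually hand-wave is the reverse direction and full faithfulness. Given $0\to L\to N\to T\to 0$ with $\grade L\ge n$ and $T\in\Ref^{\sf T}$, one must show $\syz^n\tr\syz^{n-1}N\approx\syz\tr N$ lies in $\gp$; this reduces to $\syz\tr N\approx\syz\tr T$, which is not ``analogous'' to the forward direction but needs the nontrivial fact that transposing a short exact sequence with $L^*=0$ again yields a short exact sequence up to projectives (Auslander--Bridger's Lemma~3.9, invoked in the paper). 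The paper sidesteps both this and the full-faithfulness question by quoting Iyama's duality $\tr\syz^{n-1}:\underline{\G_{n-1,0}(R)}\simeq\underline{\G_{n-1,0}(R^{\rm op})}$ on the ambient categories and then merely checking, via Proposition~\ref{key} and Lemma~\ref{cateq}, that the relevant subcategories correspond. Your route works, but organizing it around $\D$ and the ambient duality is what makes the bookkeeping disappear.
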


Here, $\syz(-)$ and $\tr(-)$ respectively stand for the syzygy and the (Auslander) transpose, while the stable category of a subcategory $\X$ of $\mod R$ is denoted by $\underline\X$.
For two subcategories $\X$ and $\Y$ of $\mod R$, we denote by $\X\ast\Y$ the subcategory of $\mod R$ consisting of modules $M$ such that there is an exact sequence $0\to L\to M\to N\to0$ with $L\in\X$ and $N\in\Y$.

The notion of $n$-torsionfree modules was also introduced by Auslander and Bridger \cite{AB}, and played a central role in the stable module theory they developed.
The structure of $n$-torsionfree modules has been well-studied; see \cite{AB, DT, EG, MTT}.
In the following, applying Theorem \ref{main1} to the case where $(R,\m)$ is a commutative local ring, we describe the structure of $n$-torsionfree modules.
Denote by $\fl(R)$ the subcategory of $\mod R$ consisting of modules of finite length, and by $\H^i_{\m}(-)$ the $i$-th local cohomology functor with respect to $\m$.
Also, we say that a finitely generated $R$-module $M$ is {\em $n$-H-spherical} if $\H^i_{\m}(M)=0$ for all $i\ne0, n$.

\begin{cor}\label{main2}
Suppose that $R$ is commutative, local and with Krull dimension $d>0$.
Let $\m$ be the maximal ideal of $R$.
For a nonnegative integer $n$, consider the following subcategories of $\mod R$.
\begin{align*}
\tf_n(R)&=\{M\in \mod R\mid\text{$M$ is $n$-torsionfree }\},\\
\sph_n^{\sf H}(R)&=\{M\in \mod R\mid\text{$M$ is $n$-H-spherical }\}.
\end{align*}
\begin{enumerate}[\rm(1)]
    \item
    If $R$ is regular, then one has the equivalence
    $$
    \xymatrix@R-1pc@C-1pc{
    \underline{\tf_{d-1}(R)}\ar@<.5mm>[rrrrr]^-{\Ext^d(\tr(-),R)}&&&&&
    \fl(R).\ar@<.5mm>[lllll]^-{\syz^{d-1}}
    }
    $$
    \item
    If $R$ is Gorenstein, then one has the equivalence
    $$
    \xymatrix@R-1pc@C-1pc{
    \underline{\tf_{d-1}(R)}\ar@<.5mm>[rrrrr]^-{\tr\syz^{d-1}\tr}&&&&&
    \underline{\sph_{d}^{\sf H}(R)}.\ar@<.5mm>[lllll]^-{\syz^{d-1}}
    }
    $$
\end{enumerate}
\end{cor}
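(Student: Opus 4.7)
The plan is to reduce both parts to Theorem~\ref{main1} by pre-composing with the Auslander transpose in order to pass between $\tf_{d-1}(R)$ and a suitable spherical category. Since $R$ is commutative, we identify $R^{\rm op}$ with $R$, and use the characterization that $M \in \tf_{d-1}(R)$ precisely when $\Ext^i_R(\tr M, R) = 0$ for $1 \le i \le d-1$. Combined with the standard fact that $\tr\tr \cong \id$ on the stable category modulo projective summands, this shows $\tr$ restricts to a stable equivalence between $\underline{\tf_{d-1}(R)}$ and $\underline{\X}$, where $\X$ is any subcategory of $\mod R$ characterized by that Ext-vanishing together with some extra condition which is automatic under the hypotheses.

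For part~(1), regularity forces $\pd_R N \le d$ for every $N \in \mod R$, so the extra condition is automatic and $\X = \sph_d(R)$. Theorem~\ref{main1} then gives $\underline{\sph_d(R)} \simeq \grd_d(R)$, and for a commutative local ring of Krull dimension $d$ a module has grade at least $d$ iff it has finite length, so $\grd_d(R) = \fl(R)$. Composing, one direction is $\Ext^d_R(-, R) \circ \tr = \Ext^d_R(\tr(-), R)$, and the other is $\tr \circ (\tr\syz^{d-1}) \simeq \syz^{d-1}$, matching the stated arrows.

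For part~(2), the same strategy applies in the Gorenstein setting: over a Gorenstein ring of dimension $d$ every module has G-dim at most $d$, so this time $\X = \sph_d^{\sf G}(R)$, and Theorem~\ref{main1} provides $\underline{\sph_d^{\sf G}(R)} \simeq \underline{\grd_d(R) \ast \Ref^{\sf T}(R)}$ via $\tr\syz^{d-1}$. The remaining substantive step is to identify $\grd_d(R) \ast \Ref^{\sf T}(R)$ with $\sph_d^{\sf H}(R)$. For the forward inclusion, an exact sequence $0 \to L \to M \to N \to 0$ with $L$ of finite length and $N$ totally reflexive (equivalently, maximal Cohen--Macaulay over a Gorenstein ring) yields $\H^i_\m(M) = 0$ for $0 < i < d$ via the long exact sequence, using $\H^i_\m(L) = 0$ for $i > 0$ and $\H^i_\m(N) = 0$ for $i < d$. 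Conversely, given $M \in \sph_d^{\sf H}(R)$, set $L = \H^0_\m(M)$; this has finite length, and the short exact sequence $0 \to L \to M \to M/L \to 0$ has $\H^i_\m(M/L) = 0$ for $i < d$, forcing $M/L$ to be MCM and hence totally reflexive. Composing with $\tr$ as in part~(1) produces the stated functors $\tr\syz^{d-1}\tr$ and $\syz^{d-1}$.

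The main technical obstacle I anticipate is the stable-category bookkeeping around the transpose: checking that the two composed functors really are mutual quasi-inverses requires using that $\tr$ restricts to a well-defined equivalence between the relevant stable subcategories and that $\tr \circ \tr\syz^{d-1} \simeq \syz^{d-1}$ modulo projective summands, for which the involutivity of $\tr$ is essential. By contrast, the local-cohomology identification of $\grd_d(R) \ast \Ref^{\sf T}(R)$ with $\sph_d^{\sf H}(R)$, though the only computation beyond Theorem~\ref{main1}, is a short application of the long exact sequence.
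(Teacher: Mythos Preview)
Your proposal is correct and follows essentially the same route as the paper: both precompose with the transpose duality $\tr:\underline{\tf_{d-1}(R)}\leftrightarrow\underline{\G_{d-1,0}(R)}$, observe that regularity (resp.\ Gorensteinness) forces $\G_{d-1,0}=\sph_d(R)$ (resp.\ $\sph_d^{\sf G}(R)$), apply Theorem~\ref{main1}, identify $\grd_d(R)=\fl(R)$, and for part~(2) identify $\fl(R)\ast\Ref^{\sf T}(R)=\sph_d^{\sf H}(R)$ via the local-cohomology long exact sequence---the paper simply packages this last identification as a separate Proposition~\ref{cm}. One small caveat: the equality $\grd_d(R)=\fl(R)$ genuinely needs $R$ to be Cohen--Macaulay (which your regular/Gorenstein hypotheses guarantee), so your phrasing ``for a commutative local ring of Krull dimension $d$'' is slightly too general as stated.
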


It may be well-known to experts that when $R$ is a three dimensional regular local ring, there is an equivalence
$$
\xymatrix@R-1pc@C-1pc{
\underline{\Ref(R)}\ar@<.5mm>[rrrr]^{\Ext^1((-)^\ast,R)}&&&&\fl(R)\ar@<.5mm>[llll]^{\syz^2},
}
$$
where $\Ref(R)$ stands for the subcategory of $\mod R$ consisting of reflexive $R$-modules, while $(-)^\ast$ denotes the $R$-dual.
The above corollary gives a higher dimensional version of this result.


\section{Our results and proofs}
In this section we give several definitions, and state and prove our results.
We also give proofs of Theorem \ref{main1} and Corollary \ref{main2}, which are displayed in the previous section.
We begin with recalling fundamental notions. 
\begin{dfn}\label{basicdfn}
\begin{enumerate}[\rm(1)]
    \item
    Let $\X$ be a subcategory of $\mod R$.
    The {\em stable category} $\underline{\X}$ of $\X$ is defines as follows: The objects of $\underline{\X}$ are the same as those of $\X$.
    The morphism set of objects $X, Y$ of $\underline{\X}$ is the quotient of the additive group $\Hom_R(X,Y)$ by the subgroup consisting of $R$-homomorphisms factoring through some finitely generated projective $R$-modules.
    Note that $\underline{\X}$ is none other than the subcategory of $\underline{\mod R}$ consisting of objects $M$ such that $M\in\X$, that is,
    $$
    \underline{\X}=\{ M\in\underline{\mod R} \mid M\in\X\}.
    $$
    \item
    Let $M$ be a finitely generated $R$-module and $P_1\xrightarrow{d_1}P_0\xrightarrow{d_0}M\to0$ a finite projective presentation of $M$.
    The {\em syzygy} $\syz M$ of $M$ is defined as $\image d_1$.
    Note that $\syz M$ is uniquely determined by $M$ up to projective summands.
    Taking the syzygy induces an additive functor $\syz:\underline{\mod R}\to\underline{\mod R}$.
    Inductively, we define $\syz^n=\syz\circ\syz^{n-1}$ for an integer $n>0$.
    The {\em (Auslander) transpose} $\tr M$ of $M$ is defined as $\Cok d_1^\ast$.
    Note that $\tr M$ is uniquely determined by $M$ up to projective summands.
    Taking the transpose induces an additive functor $\tr:\underline{\mod R}\to\underline{\mod R^{\mathrm{op}}}$.
    \item
    Let $m,n\in\ZZ_{\ge0}\cup\{\infty\}$.
    We denote by $\G_{m,n}(R)$, or simply $\G_{m,n}$, the subcategory of $\mod R$ consisting of $R$-modules $M$ such that $\Ext^i_R(M,R)=0$ for all $1\le i\le m$ and $\Ext^j_{R^{\mathrm{op}}}(\tr M,R)=0$ for all $1\le j\le n$.
    We denote by $\proj(R)$ (resp. $\gp(R)$) the subcategory of $\mod R$ consisting of finitely generated projective (resp. Gorenstein projective) $R$-modules.
    Note that $\gp(R)=\G_{\infty, \infty}$.
    A finitely generated $R$-module $M$ is called {\em $n$-torsionfree} if $M$ belongs to $\G_{0,n}$.
    We denote by $\tf_n(R)$ the subcategory of $\mod R$ consisting of $n$-torsionfree modules, that is, we set $\tf_n(R)=\G_{0,n}$.
    \item
    The {\em projective dimension} (resp. {\em Gorenstein dimension}) of a finitely generated $R$-module $M$ is defined to be the infimum of integers $n$ such that there exists an exact sequence
    $$
    0\to X_n\to X_{n-1}\to\cdots\to X_1\to X_0\to M\to0
    $$
    of finitely generated $R$-modules with $X_i$ projective (resp. Gorenstein projective).
    \item
    The {\em grade} of a finitely generated $R$-module $M$ is defined to be the infimum of integers $i$ such that $\Ext_R^i(M,R)=0$, and denoted by $\grade_R M$.
    For an integer $n$, we denote by $\grd_n(R)$ the subcategory of $\mod R$ consisting of $R$-modules $M$ satisfying that $\grade_R M\ge n$.
    \item
    Let $M, N$ be finitely generated $R$-modules.
    We say that $M$ and $N$ are {\em stably isomorphic} if there are finitely generated projective modules $P, Q$ such that $M\oplus P\cong N\oplus Q$, and then write $M\approx N$.
    Note that $M\approx N$ if and only if $M$ and $N$ are isomorphic as objects of $\underline{\mod R}$.
    \item
    Let $\X$ be a subcategory of $\mod R$.
    We say that $\X$ is {\em closed under stable isomorphism} if for finitely generated modules $M, N$ with $M\in\X$ and $M\approx N$, it holds that $N\in\X$.
\end{enumerate}
\end{dfn}

We extend the definition of $n$-spherical modules due to Auslander and Bridger.
Namely, for any subcategory $\X$ of $\mod R$ closed under stable isomorphism, we introduce the concept of $n$-$\X$-spherical modules as follows.

\begin{dfn}\label{sph}
Let $\X$ be a subcategory of $\mod R$.
Suppose that $\X$ is closed under stable isomorphism.
Let $n\ge1$ be an integer and $M$ a finitely generated $R$-module.
We say that $M$ is {\em $n$-$\X$-spherical} if $\Ext^i_R(M,R)=0$ for all $1\le i\le n-1$ and $\syz^n M\in\X$.
We denote by $\sph_n^{\X}(R)$ the subcategory of $\mod R$ consisting of $n$-$\X$-spherical $R$-modules.
We call $n$-$\proj(R)$-spherical (resp. $n$-$\gp(R)$-spherical) simply {\em $n$-spherical} (resp. {\em $n$-G-spherical}).
We denote by $\sph_n(R)$ (resp. $\sph_n^{\sf G}(R)$) the subcategory of $\mod R$ consisting of $n$-spherical (resp. $n$-G-spherical) modules.
\end{dfn}

\begin{rmk}\label{sphrmk}
Let $n\ge1$ be an integer and $M$ a finitely generated $R$-module.
Then $M$ is $n$-spherical if and only if $\Ext^i_R(M,R)=0$ for all $1\le i\le n-1$ and $M$ has projective dimension at most $n$.
Hence our convention is consistent with the original definition by Auslander and Bridger.
Similarly, $M$ is $n$-G-spherical if and only if $\Ext^i_R(M,R)=0$ for all $1\le i\le n-1$ and $M$ has Gorenstein dimension at most $n$.
\end{rmk}

For a finitely generated $R$-module $M$, we denote by $\D(M)$ the image of the canonical map $\sigma_M :M\to M^{\ast\ast}$ given by $\sigma_M(m)(f)=f(m)$ for $m\in M$ and $f\in M^\ast$.
This correspondence induces an additive functor $\D:\mod R\to\mod R$.
Note that $\D(M)\approx \syz\tr\syz\tr M$; see \cite[Appendix]{AB}.
The following proposition is an essential part of the proof of the main theorem.

\begin{prop}\label{key}
Let $n\ge1$ be an integer and $M$ a finitely generated $R$-module.
Let $\X$ be a subcategory of $\mod R$.
Assume that $\X$ satisfies the following conditions.
\begin{enumerate}[\quad\rm(i)]
    \item
    The subcategory $\X$ is closed under stable isomorphism.
    \item
    One has $\D(\X)\subset\X$.
    \item
    One has $\X\subset\G_{n,0}$.
\end{enumerate}
Then the following are equivalent.
\begin{enumerate}[\rm(1)]
    \item
    There exists an exact sequence $0\to L\to M\to N\to0$ with $\grade_R L\ge n$ and $N\in\X$.
    \item
    The module $M$ belongs to $\G_{n-1,0}$ and the module $\D(M)$ belongs to $\X$.
\end{enumerate}
In other words, one has $\grd_n(R)\ast\X=\D^{-1}(\X)\cap\G_{n-1,0}$.
\end{prop}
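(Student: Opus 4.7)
The strategy is to prove the two implications separately; taken together they give the displayed equality of subcategories.

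For $(1)\Rightarrow(2)$, I would start from a short exact sequence $0\to L\to M\xrightarrow{\pi}N\to0$ with $\grade_R L\ge n$ and $N\in\X$. The long exact sequence of $\Ext^\bullet_R(-,R)$ immediately yields $\Ext^i_R(M,R)=0$ for $1\le i\le n-1$: the two inputs are $\Ext^i_R(L,R)=0$ for $i<n$ (the grade hypothesis) and $\Ext^i_R(N,R)=0$ for $1\le i\le n$ (from $\X\subset\G_{n,0}$ by condition (iii)). Hence $M\in\G_{n-1,0}$. For $\D(M)\in\X$, I would observe that $L^\ast=0$ because $\grade_R L\ge 1$, so dualizing the sequence produces an isomorphism $\pi^\ast\colon N^\ast\xrightarrow{\sim}M^\ast$, and a second dualization produces $\pi^{\ast\ast}\colon M^{\ast\ast}\xrightarrow{\sim}N^{\ast\ast}$. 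Naturality $\sigma_N\pi=\pi^{\ast\ast}\sigma_M$ then restricts $\pi^{\ast\ast}$ to an honest isomorphism $\D(M)\cong\D(N)$ of submodules, and conditions (ii) and (i) together conclude $\D(M)\in\X$.

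For $(2)\Rightarrow(1)$, I would take the tautological sequence $0\to\Ker\sigma_M\to M\xrightarrow{\sigma_M}\D(M)\to 0$ as the required extension (with $L=\Ker\sigma_M$ and $N=\D(M)$, where $N\in\X$ by hypothesis) and verify $\grade_R\Ker\sigma_M\ge n$. The base case $\Hom_R(\Ker\sigma_M,R)=0$ holds unconditionally, because every $f\in M^\ast$ annihilates $\Ker\sigma_M$ by definition of $\sigma_M$; equivalently, the map $\D(M)^\ast\to M^\ast$ induced by the surjection $\sigma_M\colon M\twoheadrightarrow\D(M)$ is an isomorphism. Granting this, the long exact sequence of $\Ext^\bullet_R(-,R)$ applied to the tautological sequence gives $\Ext^j_R(\Ker\sigma_M,R)=0$ for $1\le j\le n-1$, by combining $M\in\G_{n-1,0}$ (which kills $\Ext^i_R(M,R)$ for $1\le i\le n-1$) with $\D(M)\in\X\subset\G_{n,0}$ (which kills $\Ext^i_R(\D(M),R)$ for $1\le i\le n$).

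The main subtlety, I expect, is the base case $(\Ker\sigma_M)^\ast=0$: without it the long exact sequence of $\Ext$ alone would only yield grade $\ge n-1$, so this unconditional observation is genuinely what upgrades the bound to $n$. The diagram chase in the forward direction also deserves a little care to extract an honest (rather than merely stable) isomorphism $\D(M)\cong\D(N)$, but this is automatic once one tracks $\sigma_M$ and $\sigma_N$ through $\pi^{\ast\ast}$.
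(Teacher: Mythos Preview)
Your overall strategy is the paper's: same tautological sequence for $(2)\Rightarrow(1)$, same long exact sequence of $\Ext$. Your $(1)\Rightarrow(2)$ is in fact cleaner than the paper's version---you get an honest isomorphism $\D(M)\cong\D(N)$ by double-dualizing and tracking $\sigma$, whereas the paper passes through transposes via \cite[Lemma~3.9]{AB} and only obtains a stable isomorphism $\D(M)\approx\D(N)$, then invokes closure under stable isomorphism.

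There is, however, a real gap in $(2)\Rightarrow(1)$: your claim that $(\Ker\sigma_M)^\ast=0$ holds \emph{unconditionally} is false, and your justification does not prove it. The fact that every $f\in M^\ast$ annihilates $\Ker\sigma_M$ says only that the restriction map $M^\ast\to(\Ker\sigma_M)^\ast$ is zero---equivalently, that $\widetilde{\sigma_M}^{\,\ast}:\D(M)^\ast\to M^\ast$ is surjective. It does \emph{not} force $(\Ker\sigma_M)^\ast=0$, because a homomorphism $\Ker\sigma_M\to R$ need not come from $M^\ast$ by restriction. For a concrete counterexample, take $R=k[x,y]/(x^2,xy,y^2)$ and $M=\tr(R/\m)$: then $\Ker\sigma_M\cong\Ext^1_R(R/\m,R)\cong (R/\m)^3$, whose $R$-dual is nonzero. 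The correct argument is exactly the paper's: feed the surjectivity of $\D(M)^\ast\to M^\ast$ into the long exact sequence to obtain an injection $(\Ker\sigma_M)^\ast\hookrightarrow\Ext^1_R(\D(M),R)$, and then kill the target using $\D(M)\in\X\subset\G_{n,0}$ (a hypothesis you already invoke for the higher $\Ext$ groups). So the vanishing of $(\Ker\sigma_M)^\ast$ is not a free ``base case'' standing apart from the long exact sequence; it is part of the same computation, and it genuinely requires condition~(iii).
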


\begin{proof}
Suppose that there is an exact sequence $0\to L\to M\to N\to0$ with $L\in\grd_n(R)$ and $N\in\X$.
Then $M$ is in $\G_{n-1,0}$ as $\grd_n(R)$ and $\X$ are contained in $\G_{n-1,0}$.
Since $L^\ast=0$, by \cite[Lemma 3.9]{AB}, there is an exact sequence $0\to A\to B\to C\to0$ such that $A\approx\tr N$, $B\approx\tr M$ and $C\approx\tr L$.
As $\syz\tr L$ is projective, we have $\syz\tr M\approx\syz\tr N$.
Hence $\D(M)\approx\D(N)$.
We get $\D(N)\in\X$ by the assumption (ii), and we obtain $\D(M)\in\X$ by the assumption (i).
The implication $(1)\Rightarrow(2)$ holds.

Conversely, assume that $M\in\G_{n-1,0}$ and $\D(M)\in\X$.
We consider the exact sequence $0\to\Ext^1(\tr M,R)\to M\xrightarrow{\widetilde{\sigma_M}}\D(M)\to0$; see \cite[Proposition 2.6]{AB}.
Set $L=\Ext^1(\tr M,R)$ and $N=\D(M)$.
As ${\sigma_M}^\ast$ is surjective, so is ${\widetilde{\sigma_M}}^\ast$.
We obtain a long exact sequence
$$
\xymatrix@R-1pc@C-1pc{
    &0\ar[r]&L^\ast\ar[r]&\Ext^1(N,R)\ar[rr]&&\Ext^1(M,R)\ar[rr]&&\Ext^1(L,R)\ar[rr]&&\cdots\\
}.
$$
Now $M$ belongs to $\G_{n-1,0}$, and $N$ belongs to $\G_{n,0}$ by the assumption (iii).
Thus $L$ has grade at least $n$, and the implication $(2)\Rightarrow(1)$ holds.
\end{proof}

Here are some comments about Proposition \ref{key}.

\begin{rmk}\label{dirsum}
Let $m\ge n$ be integers.
\begin{enumerate}[\rm(1)]
    \item
    The subcategory $\proj(R)$ satisfies the three assumptions (i), (ii) and (iii) of Proposition \ref{key}, and so do the subcategories $\gp(R)$ and $\G_{m,m+1}(R)$; see \cite[Proposition 1.1.1]{I}.
    \item
    Let $\X$ be a subcategory of $\mod R$ satisfying the three assumptions (i), (ii) and (iii) of Proposition \ref{key}.
    By Proposition \ref{key}, if $\X$ is closed under direct summands, then so is $\grd_n(R) \ast \X$.
    Hence $\grd_n(R) \ast\gp(R)$ and $\grd_n(R) \ast\G_{m,m+1}(R)$ are closed under direct summands.
\end{enumerate}
\end{rmk}

The following lemma connects Proposition \ref{key} to the notion of $n$-$\X$-spherical modules.

\begin{lem}\label{cateq}
Let $n\ge1$ be an integer.
\begin{enumerate}[\rm(1)]
    \item
    Let $\X$ be a subcategory of $\mod R$.
    Assume that $\X$ is closed under stable isomorphism.
    One then has the duality
    $$
    \xymatrix@R-1pc@C-1pc{
    \underline{\D^{-1}(\X)\cap\G_{n-1,0}(R)}\ar@<.5mm>[rrr]^-{\tr\syz^{n-1}}&&&
    \underline{\{ M'\in\G_{n-1,0}(R^{\mathrm{op}}) \mid \syz\tr\syz^n M'\in\X\}}\ar@<.5mm>[lll]^-{\tr\syz^{n-1}}
    }.
    $$
    \item
    Let $M'$ be a finitely generated $R^{\mathrm{op}}$-module and $m\ge0$ an integer.
    Let $\X(R)$ be any of the subcategories $\proj(R)$, $\gp(R)$ and $\G_{m,m+1}(R)$.
    Then $\syz\tr\syz^n M'$ belongs to $\X(R)$ if and only if $\syz^n M'$ belongs to $\X(R^{\mathrm{op}})$.
\end{enumerate}
\end{lem}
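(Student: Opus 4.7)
For part (1), my approach is to first establish a duality
$$\tr\syz^{n-1}:\underline{\G_{n-1,0}(R)} \xrightarrow{\sim} \underline{\G_{n-1,0}(R^{\mathrm{op}})}$$
at the level of the larger ambient subcategories, and then cut down to the stated ones. For $M \in \G_{n-1,0}(R)$ with projective resolution $\cdots\to P_n \to P_{n-1} \to \cdots \to P_0 \to M \to 0$, the vanishing $\Ext^i_R(M,R)=0$ for $1 \le i \le n-1$ makes the dualized complex exact, producing
$$0 \to M^\ast \to P_0^\ast \to P_1^\ast \to \cdots \to P_n^\ast \to \tr\syz^{n-1}M \to 0.$$
Setting $N := \tr\syz^{n-1}M$ and reading syzygies off this sequence gives $\syz^i N \approx \tr\syz^{n-i-1}M$ for $1 \le i \le n-1$, each of which is torsionless as a submodule of a dual of a projective. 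An $\Ext^1$ computation via $\Ext^1(\tr L,R) \cong \ker\sigma_L$ then shows $N \in \G_{n-1,0}(R^{\mathrm{op}})$, and the identification $\syz^{n-1}N \approx \tr M$ gives $\tr\syz^{n-1}N \approx \tr\tr M \approx M$. By symmetry, $\tr\syz^{n-1}$ is a duality on $\underline{\G_{n-1,0}}$.

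Extending the same computation one step further yields $\syz^n N \approx \syz\tr M$, hence
$$\syz\tr\syz^n M' \approx \syz\tr\syz\tr M \approx \D(M),$$
where $M' = \tr\syz^{n-1}M$ and the last identification is the Auslander--Bridger formula recalled before Proposition \ref{key}. Thus under the duality the condition $M \in \D^{-1}(\X)$ corresponds exactly to $\syz\tr\syz^n M' \in \X$, which yields (1).

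For part (2), put $L = \syz^n M'$; since $n \ge 1$, $L$ is torsionless, so $\Ext^1_R(\tr L,R)=0$. The plan is to combine the standard syzygy shift
$$\Ext^i_R(\syz\tr L,R) \cong \Ext^{i+1}_R(\tr L,R)\qquad (i\ge 1)$$
with the identity
$$\Ext^j_{R^{\mathrm{op}}}(\tr\syz\tr L,R) \cong \Ext^{j-1}_{R^{\mathrm{op}}}(L,R)\qquad (j\ge 2),$$
obtained by a syzygy shift together with the stable isomorphism $\syz\tr\syz\tr L \approx \D(L) \approx L$, which holds because $L$ is torsionless. Noting also that $\syz\tr L$ is itself torsionless, the case $\X = \G_{m,m+1}$ falls out by translating the Ext-vanishing conditions on both sides. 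For $\X = \proj(R)$ or $\gp(R)$, the first identity together with $\Ext^1_R(\tr L,R)=0$ yields $\Ext^i_R(\tr L,R)=0$ for all $i \ge 1$; combined with the bound $\pd_R\tr L \le 1$ (respectively $\Gdim_R\tr L \le 1$) read off from the sequence $0 \to \syz\tr L \to Q \to \tr L \to 0$ with $Q$ projective and $\syz\tr L \in \X(R)$, the Auslander--Bridger formula forces $\tr L \in \X(R)$, and then $L \approx \tr\tr L \in \X(R^{\mathrm{op}})$.

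I expect the main technical hurdle to be the identity for $\tr\syz\tr L$ used in the $\G_{m,m+1}$ case, which hinges on the reduction $\D(L) \approx L$ for torsionless $L$. Once this is in place, the three cases $\proj$, $\gp$, and $\G_{m,m+1}$ are handled uniformly.
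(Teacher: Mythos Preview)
Your argument for (1) is essentially the paper's, only you prove the duality $\tr\syz^{n-1}:\underline{\G_{n-1,0}(R)}\simeq\underline{\G_{n-1,0}(R^{\mathrm{op}})}$ by hand where the paper simply cites \cite[Proposition 1.1.1]{I}; the key computation $\syz\tr\syz^n M'\approx\D(M)$ is identical. For (2) your approach is correct but heavier than the paper's. The paper observes two facts and stops: (a) $\syz\tr$ sends $\X(R)$ into $\X(R^{\mathrm{op}})$ for each of the three choices of $\X$, and (b) $\syz\tr\syz\tr L\approx L$ since $L=\syz^n M'$ is torsionless. Both implications then drop out symmetrically: $\syz\tr L\in\X(R)\Rightarrow L\approx\syz\tr(\syz\tr L)\in\X(R^{\mathrm{op}})$, and conversely. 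Your route instead unwinds the $\Ext$-conditions defining $\G_{m,m+1}$ via the two shift identities, and then treats $\proj$ and $\gp$ separately through the Auslander--Bridger formula $\Gdim=\sup\{i:\Ext^i(-,R)\ne0\}$ (plus, implicitly, that Gorenstein projective of finite projective dimension is projective). This works, but your closing claim that the three cases are handled ``uniformly'' is not quite accurate, and you only spell out the forward direction for $\proj$ and $\gp$; the converse (if $L\in\X(R^{\mathrm{op}})$ then $\syz\tr L\in\X(R)$) is immediate from (a) but should be said. The paper's formulation via the single inclusion $\syz\tr(\X)\subset\X$ is what makes the argument genuinely uniform.
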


\begin{proof}
(1) By \cite[Proposition 1.1.1]{I}, the functor $\tr\syz^{n-1}:\underline{\mod R}\leftrightarrow\underline{\mod R^{\rm op}}$ gives a duality $\underline{\G_{n-1,0}(R)}\leftrightarrow\underline{\G_{n-1,0}(R^{\rm op})}$.
Hence it is enough to show that both of the above restricted correspondences are well-defined.
Let $M$ be a finitely generated $R$-module satisfying that $M\in\G_{n-1,0}(R)$ and $\D(M)\in\X$.
Set $M'=\tr\syz^{n-1}M$.
We have
$$
\syz\tr\syz^n M'\approx \syz\tr\syz\tr\tr\syz^{n-1}\tr\syz^{n-1}M\approx \syz\tr\syz\tr M\approx\D(M).
$$
Thus the functor from the left hand side is well-defined.
The converse is proved similarly.

(2) Since $\syz^n M'$ is in $\G_{0,1}(R^{\mathrm{op}})$, one has $\syz\tr\syz\tr\syz^n M'\approx\syz^n M'$.
Moreover, the inclusion $\syz\tr(\X(R))\subset\X(R^{\mathrm{op}})$ holds now.
Hence the assertion follows.
\end{proof}

The following theorem is the main result of this paper.

\begin{thm}\label{sphthm}
Let $n\ge1$ be an integer.
\begin{enumerate}[\rm(1)]
    \item
    One has the equivalence
    $$
    \xymatrix@R-1pc@C-1pc{
    \underline{\sph_n(R)}\ar@<.5mm>[rrrr]^-{\Ext^n(-,R)}&&&&
    \grd_n(R^{\rm op}).\ar@<.5mm>[llll]^-{\tr\syz^{n-1}}
    }
    $$
    \item
    Let $m\in\ZZ_{\ge n}\cup\{\infty\}$.
    One then has the equivalence
    $$
    \xymatrix@R-1pc@C-1pc{
    \underline{\sph_n^{\G_{m,m+1}}(R)}\ar@<.5mm>[rrrr]^-{\tr\syz^{n-1}}&&&&
    \underline{\grd_n(R^{\rm op})\ast\G_{m,m+1}(R^{\rm op})}.\ar@<.5mm>[llll]^-{\tr\syz^{n-1}}
    }
    $$
\end{enumerate}
\end{thm}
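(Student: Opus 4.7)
The plan is to deduce both equivalences from Lemma \ref{cateq}(1) applied with $R^{\mathrm{op}}$ in the role of $R$ and a suitable $\X \subset \mod R^{\mathrm{op}}$, using Proposition \ref{key} to rewrite the $\mod R^{\mathrm{op}}$-side of the lemma and Lemma \ref{cateq}(2) to rewrite the $\mod R$-side.

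For part (1), take $\X = \proj(R^{\mathrm{op}})$, whose hypotheses for Proposition \ref{key} are satisfied by Remark \ref{dirsum}(1). Proposition \ref{key} turns the $\mod R^{\mathrm{op}}$-side into $\underline{\grd_n(R^{\mathrm{op}}) \ast \proj(R^{\mathrm{op}})}$. Every short exact sequence $0 \to L \to M \to P \to 0$ with $P$ projective splits, so every object of this category has the form $L \oplus P$ with $L \in \grd_n(R^{\mathrm{op}})$; moreover, for $n \ge 1$ and $L \in \grd_n(R^{\mathrm{op}})$ one has $L^\ast = 0$, hence $\Hom(L, P') = 0$ for every projective $P'$. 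Thus projective summands vanish in the stable category and morphisms survive unchanged, collapsing this stable category to $\grd_n(R^{\mathrm{op}})$ itself. On the $\mod R$-side, Lemma \ref{cateq}(2) converts the condition $\syz \tr \syz^n M \in \proj(R^{\mathrm{op}})$ into $\syz^n M \in \proj(R)$, so the $\mod R$-side equals $\underline{\sph_n(R)}$. This yields the equivalence via $\tr \syz^{n-1}$ in both directions. To identify it with $\Ext^n(-, R)$ on $\sph_n(R)$: given a projective resolution $0 \to P_n \to \cdots \to P_0 \to M \to 0$ of length $n$, the module $\tr \syz^{n-1} M$ is by definition $\Cok(P_{n-1}^\ast \to P_n^\ast)$, which coincides with $\Ext^n(M, R)$ computed as the top cohomology of the dualized complex.

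For part (2), take $\X = \G_{m, m+1}(R^{\mathrm{op}})$, whose hypotheses for Proposition \ref{key} hold by Remark \ref{dirsum}(1) (the assumption $m \ge n$ ensures condition (iii)). Proposition \ref{key} turns the $\mod R^{\mathrm{op}}$-side into $\underline{\grd_n(R^{\mathrm{op}}) \ast \G_{m, m+1}(R^{\mathrm{op}})}$, and Lemma \ref{cateq}(2) turns the $\mod R$-side into $\underline{\sph_n^{\G_{m, m+1}}(R)}$, with $\tr \syz^{n-1}$ as the functor both ways.

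The substantive steps are the collapse $\underline{\grd_n(R^{\mathrm{op}}) \ast \proj(R^{\mathrm{op}})} = \grd_n(R^{\mathrm{op}})$ in part (1), which hinges on $L^\ast = 0$ for $L \in \grd_n(R^{\mathrm{op}})$ with $n \ge 1$, and the coincidence $\Ext^n(-, R) = \tr \syz^{n-1}$ on $\sph_n(R)$; beyond these, both equivalences follow formally from Proposition \ref{key} and Lemma \ref{cateq}.
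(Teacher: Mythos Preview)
Your proof is correct and follows essentially the same route as the paper: both assemble the equivalences from Proposition~\ref{key}, Remark~\ref{dirsum}(1), and Lemma~\ref{cateq}, and for part~(1) both invoke the additional identifications $\underline{\grd_n\ast\proj}=\grd_n$ and $\tr\syz^{n-1}=\Ext^n(-,R)$ on $\sph_n(R)$. You spell out these two identifications (via $L^\ast=0$ and via the top cohomology of the dualized finite resolution) more carefully than the paper, which simply asserts them.
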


\begin{proof}
The assertion (2) follows from Proposition \ref{key}, Remark \ref{dirsum}(1) and Lemma \ref{cateq}.
It is easily seen that the category $\underline{\grd_n(R) \ast\proj(R)}$ is naturally equivalent to the category $\underline{\grd_n(R)}=\grd_n(R)$.
Moreover, since $n$-spherical modules have projective dimension at most $n$, taking $\tr\syz^{n-1}$ on $\sph_n(R)$ is the same as taking $\Ext^n(-, R)$.
Hence the assertion (1) is seen similarly.
\end{proof}

\begin{proof}[Proof of Theorem \ref{main1}]
The first assertion of Theorem \ref{main1} is the same as Theorem \ref{sphthm}(1).
The second assertion follows by letting $m=\infty$ in Theorem \ref{sphthm}(2).
\end{proof}

\begin{rmk}\label{perf}
In general, the grade of a non-zero finitely generated module is less than or equal to its projective dimension.
A finitely generated $R$-module $M$ is said to be {\em perfect} if $M=0$ or the grade of $M$ equals its projective dimension.
For a positive integer $n$, we denote by $\Perf_n(R)$ the subcategory of $\mod R$ consisting of perfect $R$-modules with grade $n$ or infinity.
Then the equality $\grd_n(R)\cap\sph_n(R)=\Perf_n(R)$ holds.
By restricting the correspondence of Theorem \ref{sphthm}(1), we obtain the duality $\Ext_R^n(-,R):\Perf_n(R)\xrightarrow{\sim}\Perf_n(R^{\mathrm{op}})$.
Theorem \ref{sphthm}(1) can be regarded as a generalization of this classical result.
\end{rmk}

In the rest of this paper, we assume that $R$ is commutative and local, and denote by $\m$ the maximal ideal of $R$.
Let $\Gamma_{\m}(-)$ be the $\m$-torsion functor; recall that $\Gamma_{\m}(M)=\{x\in M\mid {\m}^r x=0\text{ for some }r>0\}$ for an $R$-module $M$.
Let $\H_\m^i(-)$ be the $i$-th local cohomology functor, that is, the $i$-th right derived functor of $\Gamma_\m(-)$. 

A finitely generated $R$-module $M$ is said to be {\em maximal Cohen--Macaulay} if the depth of $M$ is greater than or equal to the (Krull) dimension of the ring $R$; see \cite[Chapter 2]{BH} for details.
We denote by $\cm(R)$ the subcategory of $\mod R$ consisting of maximal Cohen--Macaulay $R$-modules.
By Grothendieck's vanishing theorem \cite[Theorem 3.5.7]{BH}, a finitely generated $R$-module $M$ is maximal Cohen--Macaulay if and only if $\H^i_{\m}(M)=0$ for all $i<d$, where $d$ is the dimension of $R$.
The following proposition provides various equivalent conditions for a finitely generated module $M$ to be spherical relative to the local cohomology functor $\H_{\m}$.
Here, we say that a finitely generated $R$-module $M$ is {\em $n$-H-spherical} if $\H^i_{\m}(M)=0$ for all $i\ne0, n$. 

\begin{prop}\label{cm}
Suppose that $R$ is Cohen--Macaulay and with dimension $d>0$. 
Let $M$ be a finitely generated $R$-module.
Consider the following four conditions.
\begin{enumerate}[\quad\rm(a)]
    \item
    There exists an exact sequence $0\to L\to M\to N\to0$ of $R$-modules such that $L$ has finite length and $N$ is maximal Cohen--Macaulay.
    \item
    The module $M/{\Gamma_{\m}(M)}$ is maximal Cohen--Macaulay.
    \item
    The module $M$ is $d$-H-spherical.
    \item
    The module $M$ is locally maximal Cohen--Macaulay on the punctured spectrum of $R$.
\end{enumerate}
Then the following hold.
\begin{enumerate}[\rm(1)]
    \item
    The implications ${\rm(a)}\Leftrightarrow{\rm(b)}\Leftrightarrow{\rm(c)}\Rightarrow{\rm(d)}$ hold.
    \item
    If $d=1$, then the implication ${\rm(d)}\Rightarrow{\rm(c)}$ holds.
    \item
    If $d\ge2$, then the implication ${\rm(d)}\Rightarrow{\rm(c)}$ never holds.
\end{enumerate}
\end{prop}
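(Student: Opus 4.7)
The plan is to handle (1), (2), (3) in turn, using the natural short exact sequence
\[
0 \to \Gamma_{\m}(M) \to M \to M/\Gamma_{\m}(M) \to 0
\]
together with its long exact sequence of local cohomology, and for (3) an explicit counterexample.

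For part (1), first note that $\Gamma_{\m}(M)$ is finitely generated and $\m$-power torsion, hence of finite length, while any maximal Cohen--Macaulay module has positive depth (since $d>0$) and therefore vanishing $\Gamma_{\m}$. Applying $\Gamma_{\m}$ to any sequence as in (a) then forces $L=\Gamma_{\m}(M)$, giving (a)$\Leftrightarrow$(b). Taking the long exact sequence of local cohomology of the displayed sequence, and using $\H^i_{\m}(\Gamma_{\m}(M))=0$ for $i\ge 1$ (finite length), yields $\H^i_{\m}(M)\cong \H^i_{\m}(M/\Gamma_{\m}(M))$ for all $i\ge 1$; combined with $\Gamma_{\m}(M/\Gamma_{\m}(M))=0$, condition (c) becomes $\H^i_{\m}(M/\Gamma_{\m}(M))=0$ for all $i<d$, i.e.\ the maximal Cohen--Macaulay condition, giving (b)$\Leftrightarrow$(c). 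For (c)$\Rightarrow$(d), localize the sequence from (a) at $\p\ne\m$: since $L$ is supported only at $\m$ one has $L_{\p}=0$, so $M_{\p}\cong N_{\p}$, and a maximal Cohen--Macaulay module over a Cohen--Macaulay ring stays maximal Cohen--Macaulay after localization (build an $N$-regular sequence of length $\height\p$ inside $\p$ by prime avoidance and localize).

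For part (2), when $d=1$ condition (c) requires $\H^i_{\m}(M)=0$ for $i\ne 0,1$; by Grothendieck's vanishing theorem $\H^i_{\m}(-)=0$ for $i>1$, so (c) holds for every finitely generated $M$. Hence (d)$\Rightarrow$(c) is automatic.

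For part (3), let $R$ be any regular local ring of dimension $d\ge 2$ and take $M=\m$. For every $\p\ne\m$ some element of $\m\setminus\p$ is a unit in $R_{\p}$, so $\m R_{\p}=R_{\p}$ and $M_{\p}\cong R_{\p}$ is maximal Cohen--Macaulay, giving (d). On the other hand, the long exact sequence attached to $0\to\m\to R\to R/\m\to 0$, together with $\H^0_{\m}(R)=\H^1_{\m}(R)=0$ (from $\depth R=d\ge 2$) and $\H^i_{\m}(R/\m)=0$ for $i>0$, yields $\H^1_{\m}(\m)\cong R/\m\ne 0$, which violates (c). The main obstacle is choosing the counterexample in (3): natural candidates such as $R/(x_1x_2)$ or $R/\q$ for a non-maximal prime fail at height one localizations, and one has to notice that $M=\m$ works precisely because $\m_{\p}$ becomes the whole ring $R_{\p}$ away from $\m$; the rest of the argument reduces to long exact sequence bookkeeping.
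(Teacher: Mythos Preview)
Your proof is correct and follows essentially the same route as the paper's: the sequence $0\to\Gamma_{\m}(M)\to M\to M/\Gamma_{\m}(M)\to 0$ and its local-cohomology long exact sequence for (a)$\Leftrightarrow$(b)$\Leftrightarrow$(c), localization on the punctured spectrum for the implication to (d), Grothendieck vanishing for part~(2), and the counterexample $M=\m$ with the long exact sequence of $0\to\m\to R\to R/\m\to 0$ for part~(3). One cosmetic slip: in (3) you begin ``let $R$ be any regular local ring,'' but $R$ is already a fixed Cohen--Macaulay local ring by hypothesis, and your argument only uses $\depth R=d\ge 2$; regularity plays no role, so just delete that word and the proof stands as written.
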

\begin{proof}
(1) Suppose that ${\rm(a)}$ holds.
Since $\Gamma_{\m}(L)=L$ and $\Gamma_{\m}(N)=0$, the short exact sequence $0\to L\xrightarrow{f}M\to N\to 0$ yields an isomorphism $\Gamma_{\m}(f):L\xrightarrow{\sim}\Gamma_{\m}(M)$.
Hence an isomorphism $N\xrightarrow{\sim}M/\Gamma_{\m}(M)$ is induced, and ${\rm(b)}$ holds.
Conversely, since the module $\Gamma_{\m}(M)$ has finite length, if ${\rm(b)}$ holds, then the short exact sequence $0\to\Gamma_{\m}(M)\to M\to M/\Gamma_{\m}(M)\to0$ satisfies the required condition in ${\rm(a)}$.
We get the equivalence ${\rm(a)}\Leftrightarrow{\rm(b)}$.
Next, we note that $\Gamma_{\m}(M/\Gamma_{\m}(M))=0$ and $\H^i_{\m}(M)\cong \H^i_{\m}(M/\Gamma_{\m}(M))$ for all $i>0$; see \cite[Chapter 2]{BS} for instance.
The equivalence ${\rm(b)}\Leftrightarrow{\rm(c)}$ follows from these and Grothendieck's vanishing theorem.
As $M$ and $M/\Gamma_{\m}(M)$ are locally isomorphic on the punctured spectrum of $R$, the implication ${\rm(b)}\Rightarrow{\rm(d)}$ clearly holds.

(2) When $d=1$, the module $M/\Gamma_{\m}(M)$ is maximal Cohen--Macaulay for all finitely generated $R$-modules $M$ as $\Gamma_{\m}(M/\Gamma_{\m}(M))=0$, and therefore the assertion follows.

(3) The maximal ideal $\m$ of $R$ is locally isomorphic to $R$ on the punctured spectrum of $R$.
However, one has $\Gamma_{\m}(R)=\H^1_{\m}(R)=0$ by the assumption.
The long exact sequence
$$
0\to\Gamma_{\m}(\m)\to\Gamma_{\m}(R)\to\Gamma_{\m}(R/{\m})\to \H^1_{\m}(\m)\to \H^1_{\m}(R)\to\cdots
$$
indicates that $\H^1_{\m}(\m)\cong\Gamma_{\m}(R/{\m})=R/{\m}\ne0$.
Hence the implication ${\rm(d)}\Rightarrow{\rm(c)}$ never holds.
\end{proof}

Applying Theorem \ref{sphthm} to the stable category $\underline{\tf_n(R)}$ gives rise to the following theorem.
Here, $\fl(R)$ and $\sph_n^{\sf H}(R)$ respectively stand for the subcategory of $\mod R$ consisting of $R$-modules of finite length, and the subcategory of $\mod R$ consisting of $n$-H-spherical $R$-modules.

\begin{thm}\label{d-1}
Suppose that $R$ is Cohen--Macaulay and with dimension $d>0$.
\begin{enumerate}[\rm(1)]
    \item
    If $R$ is regular, then one has the equivalence
    $$
    \xymatrix@R-1pc@C-1pc{
    \underline{\tf_{d-1}(R)}\ar@<.5mm>[rrrr]^-{\Ext^d(\tr(-),R)}&&&&
    \fl(R).\ar@<.5mm>[llll]^-{\syz^{d-1}}
    }
    $$
    \item
    If $R$ is Gorenstein, then the following equalities hold.
    $$
    \sph_d^{\sf H}(R)=\fl(R)\ast\cm(R)=\{M\in\mod R \mid M/\Gamma_{\m}(M) \text{ is maximal Cohen--Macaulay }\}.
    $$
    Moreover, one then has the equivalence
    $$
    \xymatrix@R-1pc@C-1pc{
    \underline{\tf_{d-1}(R)}\ar@<.5mm>[rrrr]^-{\tr\syz^{d-1}\tr}&&&&
    \underline{\sph_d^{\sf H}(R)}.\ar@<.5mm>[llll]^-{\syz^{d-1}}
    }
    $$
\end{enumerate}
\end{thm}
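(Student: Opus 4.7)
The plan is to deduce Theorem \ref{d-1} as a specialization of Theorem \ref{sphthm}(2) to the commutative local Gorenstein setting, combined with Proposition \ref{cm}. I would first dispatch part (2), and then obtain part (1) as the regular-ring reduction.

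For part (2), I begin by collecting three identifications available over a Gorenstein local ring $R$ of dimension $d$: first, $\gp(R) = \cm(R)$ because every maximal Cohen--Macaulay module over a Gorenstein ring is totally reflexive; second, $\grd_d(R) = \fl(R)$, since the formula $\grade_R M = d - \dim M$ valid over Cohen--Macaulay rings forces any module of grade at least $d$ to have dimension zero; and third, $\sph_d^{\sf G}(R) = \G_{d-1,0}(R)$, because $\Gdim M \le d$ holds uniformly and so the syzygy condition $\syz^d M \in \gp(R)$ is automatic. Setting $n = d$ and $m = \infty$ in Theorem \ref{sphthm}(2) then yields an equivalence
$$
\tr\syz^{d-1} : \underline{\sph_d^{\sf G}(R)} \simeq \underline{\fl(R) \ast \cm(R)}.
$$
The first string of equalities in (2) is immediate from Proposition \ref{cm}, whose equivalence of (a), (b), (c) gives $\fl(R) \ast \cm(R) = \sph_d^{\sf H}(R) = \{M \in \mod R \mid M/\Gamma_\m(M) \text{ is maximal Cohen--Macaulay}\}$. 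To produce the claimed stable equivalence, I would compose with the classical Auslander transpose duality $\tr : \underline{\G_{d-1,0}(R)} \leftrightarrow \underline{\tf_{d-1}(R)}$. The forward functor becomes $\tr\syz^{d-1} \circ \tr = \tr\syz^{d-1}\tr$, and the inverse becomes $\tr \circ \tr\syz^{d-1} \approx \syz^{d-1}$, using that $\tr\tr$ is isomorphic to the identity on $\underline{\mod R}$.

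For part (1), when $R$ is regular of dimension $d$ every finitely generated module has projective dimension at most $d$, so $\sph_d(R) = \sph_d^{\sf G}(R)$ and $\cm(R) = \proj(R)$. Since any extension of a projective module splits, $\underline{\fl(R) \ast \proj(R)} = \underline{\fl(R)} = \fl(R)$, and therefore part (2) specializes to the equivalence $\underline{\tf_{d-1}(R)} \simeq \fl(R)$. To match the functor $\Ext^d_R(\tr(-), R)$ stated in (1), I note that for $M \in \tf_{d-1}(R)$ the transpose $\tr M$ is $d$-spherical, and taking a projective resolution of $\tr M$ of length $d$ shows that $\tr\syz^{d-1}(\tr M) \approx \Ext^d_R(\tr M, R)$ in $\underline{\mod R}$, exactly as in the remark embedded in the proof of Theorem \ref{sphthm}.

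The main obstacle, to the extent there is one, is bookkeeping: one has to keep track of several simultaneous identifications (grade versus codimension over Cohen--Macaulay rings, Gorenstein projective versus maximal Cohen--Macaulay over Gorenstein rings, and the automatic boundedness of Gorenstein dimension by $d$) in order to align the output of Theorem \ref{sphthm}(2) with the statement of Theorem \ref{d-1}, and to read off the explicit forms of the equivalence functors in both directions. The substantive content, namely the equivalence itself and the $\sf H$-spherical reinterpretation, follows directly from Theorem \ref{sphthm}(2) and Proposition \ref{cm} once the dictionary is set.
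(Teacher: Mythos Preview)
Your proposal is correct and follows essentially the same approach as the paper: both assemble the identifications $\grd_d(R)=\fl(R)$, $\gp(R)=\cm(R)$, and $\sph_d^{\sf G}(R)=\G_{d-1,0}$ over a Gorenstein local ring, invoke Theorem~\ref{sphthm} and Proposition~\ref{cm}, and compose with the transpose duality $\tr:\underline{\G_{d-1,0}}\leftrightarrow\underline{\tf_{d-1}(R)}$. The only organizational difference is that the paper obtains part~(1) by appealing to Theorem~\ref{sphthm}(1) directly (where the functor $\Ext^d(-,R)$ already appears), whereas you obtain it by specializing part~(2) and then identifying $\tr\syz^{d-1}\tr$ with $\Ext^d(\tr(-),R)$; both routes are valid and amount to the same computation.
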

\begin{proof}
Note that the equality $\grd_d(R)=\fl(R)$ holds; see \cite[Proposition 1.2.10]{BH}.
By \cite[Theorem 2.2.7]{BH} and \cite[Theorem 4.20]{AB}, if $R$ is regular (resp. Gorenstein), then any finitely generated $R$-module has projective (resp. Gorenstein) dimension at most $d$.
When this is the case, $\sph_{d}(R)$ (resp. $\sph_d^{\sf G}(R)$) is equal to $\G_{d-1,0}$.
Moreover, if $R$ is Gorenstein, then maximal Cohen--Macaulay modules are totally reflexive, and the converse is also true; see \cite[Theorem 3.3.10]{BH} for instance. 
Since one has the duality $\xymatrix@R-1pc@C-1pc{\underline{\G_{d-1,0}}\ar@<.5mm>[rr]^-{\tr}&&\underline{\tf_{d-1}(R)}\ar@<.5mm>[ll]^-{\tr}}$ by \cite[Proposition 1.1.1]{I}, the assertion follows from Theorem \ref{sphthm} and Proposition \ref{cm}.
\end{proof}

\begin{proof}[Proof of Corollary \ref{main2}]
The assertion of Corollary \ref{main2} is included in Theorem \ref{d-1}.
\end{proof}

We denote by $\Ref(R)$ the subcategory of $\mod R$ consisting of reflexive $R$-modules.
The exact sequence $0\to\Ext_{R^{\rm op}}^1(\tr M,R)\to M\xrightarrow{\sigma_M}M^{\ast\ast}\to\Ext_{R^{\rm op}}^2(\tr M,R)\to0$ for each finitely generated $R$-module $M$ shows that $M$ is reflexive if and only if it is 2-torsionfree.
In other words, the equality $\Ref(R)=\tf_2(R)$ holds.
The following corollary is a special case of Theorem \ref{d-1}.

\begin{cor}\label{refl}
Let $R$ be a three dimensional regular local ring.
One then has the equivalence
    $$
    \xymatrix@R-1pc@C-1pc{
    \underline{\Ref(R)}\ar@<.5mm>[rrrr]^-{\Ext^1((-)^\ast,R)}&&&&
    \fl(R).\ar@<.5mm>[llll]^-{\syz^{2}}
    }
    $$
\end{cor}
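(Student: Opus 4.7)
The plan is to deduce Corollary \ref{refl} as the specialization of Theorem \ref{d-1}(1) to $d=3$, after translating the forward functor into the language of the $R$-dual. First I would instantiate Theorem \ref{d-1}(1) at $d=3$ to obtain the equivalence
$$
\underline{\tf_{2}(R)}\;\xrightarrow{\;\Ext^{3}(\tr(-),R)\;}\;\fl(R),\qquad
\fl(R)\;\xrightarrow{\;\syz^{2}\;}\;\underline{\tf_{2}(R)}.
$$
Next I would invoke the identification $\Ref(R)=\tf_{2}(R)$ that is recorded in the paragraph preceding the corollary, which immediately puts the source category in the correct form and also makes the quasi-inverse $\syz^{2}$ match the one in the statement.

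The only remaining point is to replace $\Ext^{3}(\tr(-),R)$ with $\Ext^{1}((-)^{\ast},R)$ on $\Ref(R)$. For this, I would take any finite projective presentation $P_{1}\xrightarrow{d_{1}}P_{0}\to M\to 0$ and dualize into $R$, producing the four-term exact sequence
$$
0\to M^{\ast}\to P_{0}^{\ast}\to P_{1}^{\ast}\to \tr M\to 0.
$$
This exhibits $M^{\ast}$ as $\syz^{2}(\tr M)$ up to projective summands, i.e.\ $\syz^{2}(\tr M)\approx M^{\ast}$. Since $\Ext^{i}(-,R)$ for $i\ge 1$ is insensitive to projective summands and since the standard dimension-shifting short exact sequences give $\Ext^{1}(\syz^{2}N,R)\cong\Ext^{3}(N,R)$, I obtain
$$
\Ext^{1}(M^{\ast},R)\;\cong\;\Ext^{1}(\syz^{2}\tr M,R)\;\cong\;\Ext^{3}(\tr M,R),
$$
naturally in $M\in\underline{\Ref(R)}$. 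Combined with the equivalence above, this yields exactly the stated pair of mutually inverse functors.

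There is no real obstacle here: the argument is entirely a matter of specializing $d=3$, invoking $\Ref(R)=\tf_{2}(R)$, and running one round of dimension shifting through the Auslander transpose. The mildest thing to double-check is that the identification $\Ext^{3}(\tr M,R)\cong\Ext^{1}(M^{\ast},R)$ is functorial on the stable category, which follows because both sides depend only on $M$ modulo projective summands and the four-term sequence above is natural in $M$.
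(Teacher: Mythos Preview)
Your proposal is correct and follows essentially the same route as the paper: specialize Theorem \ref{d-1}(1) to $d=3$, use $\Ref(R)=\tf_2(R)$, and identify the forward functor via $(-)^\ast\approx\syz^2\tr(-)$. The paper compresses this into a single line by citing $(-)^\ast\approx\syz^2\tr(-)$ directly, whereas you supply the four-term exact sequence and the dimension-shifting step that justify it; the content is the same.
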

\begin{proof}
Since $(-)^\ast\approx\syz^2\tr(-)$, the assertion is none other than the case $d=3$ of Theorem \ref{d-1}(1).
\end{proof}

\begin{ac}
The author would like to thank his supervisor Ryo Takahashi for a lot of valuable discussions and advice.
\end{ac}


\begin{thebibliography}{99}
\bibitem{AB}
{\sc M. Auslander; M. Bridger}, Stable module theory, Memoirs of the American Mathematical Society {\bf 94}, {\em American Mathematical Society, Providence, R.I.}, 1969.
\bibitem{BS}
{\sc M. P. Brodmann; R. Y. Sharp}, Local cohomology: an algebraic introduction with geometric applications, Cambridge Studies in Advanced Mathematics {\bf 60}, {\it Cambridge University Press, Cambridge}, 1998.
\bibitem{BH}
{\sc W. Bruns; J. Herzog}, Cohen--Macaulay rings, revised edition, Cambridge Studies in Advanced Mathematics {\bf 39}, {\it Cambridge University Press, Cambridge}, 1998.
\bibitem{DT}
{\sc S. Dey; R. Takahashi}, On the subcategories of $n$-torsionfree modules and related modules, {\em Collect. Math.} (to appear), {\tt arXiv:2101.04465}
\bibitem{EG}
{\sc E. G. Evans; P. Griffith}, Syzygies, London Mathematical Society Lecture Note Series {\bf 106}, {\em Cambridge University Press, Cambridge}, 1985.
\bibitem{H}
{\sc Z. Y. Huang}, Syzygy modules for quasi $k$-Gorenstein rings, {\em J. Algebra} {\bf 299} (2006), no. 1, 21--32.
\bibitem{I}
{\sc O. Iyama}, Higher-dimensional Auslander--Reiten theory on maximal orthogonal subcategories, {\em Adv. Math.} {\bf 210} (2007), no. 1, 22--50.
\bibitem{MTT}
{\sc H. Matsui; R. Takahashi; Y. Tsuchiya}, When are $n$-syzygy modules $n$-torsionfree?, {\em Arch. Math. (Basel)} {\bf 108} (2017), no. 4, 351--355.
\end{thebibliography}
\end{document}